\newtheorem{theorem}{Theorem}
\newtheorem{corollary}{Corollary}
\newtheorem{lemma}{Lemma}[section]
\theoremstyle{definition}
\theoremstyle{remark}
\numberwithin{equation}{section}
\newcommand{\norml}[1]{\left\vert#1\right\vert}
\newcommand{\norm}[1]{\left\Vert#1\right\Vert}
\newcommand{\RR}{\mathbb{R}}
\newcommand{\NN}{\mathbb{N}}
\newcommand{\ZZ}{\mathbb{Z}}
\newcommand{\QQ}{\mathbb{Q}}
\newcommand{\mcD}{\mathcal{D}}
\newcommand{\mcC}{\mathcal{C}}
\DeclareMathOperator{\bad}{{\bf Bad}}
\DeclareMathOperator{\wdim}{windim}
\DeclareMathOperator{\bob}{\bf B}
\DeclareMathOperator{\alice}{\bf A}
\begin{document}
\title[The winning property of mixed badly approximable numbers]
{The winning property \\
of mixed badly approximable numbers}
\author{Yaqiao Li}
\address{School of Mathematical Sciences, Peking University, Beijing, 100871, China }
\email{yaqiaoli@pku.edu.cn}
\thanks{}
\subjclass{} \keywords{}

\begin{abstract}
For any pair of real numbers $(i,j)$ with $0<i,j<1$ and $i+j=1$, we
prove that the set of $p$-adic mixed $(i,j)$-badly approximable
numbers $\bad_p(i, j)$ is $1/2$-winning in the sense of Schmidt's
game. This improves a recent result of Badziahin, Levesley, and
Velani on mixed Schmidt conjecture.
\end{abstract}

\maketitle

\section{Introduction}\label{S:Intro}

Given a pair of real numbers $(i, j)$ such that
\begin{equation} \label{ij}
0 < i, j < 1 \qquad \text{ and } \qquad i+j=1.
\end{equation}
Let $\bad(i,j)$ denote the set of \emph{$(i,j)$-badly approximable
vectors} in $\RR^2$, that is,
\begin{align} \label{def:bad_ij}
\bad(i,j) :=\Big\{&(x, y) \in \RR^2 : \exists \  c(x,y)>0 \text{
such that } \notag \\
&\max\{ \norm{qx}^{1/i}, \norm{qy}^{1/j} \} > \frac{c(x,y)}{q},
\forall \ q \in \NN \Big\},
\end{align}
where $\norm{\cdot}$ denotes the distance of a number to its nearest
integer. The Schmidt conjecture says that for any two pairs of real
numbers $(i_1, j_1)$ and $(i_2, j_2)$ satisfying \eqref{ij}, we have
\[
\bad(i_1, j_1) \cap \bad(i_2, j_2) \neq \emptyset.
\]
Recently, Badziahin, Pollington, and Velani proved Schmidt
conjecture affirmatively in \cite{BPV} by showing that the
intersection of countably many $\bad(i_n, j_n)$ is of full Hausdorff
dimension. In two recent papers, An improved BPV's theorem by
showing that every $\bad(i,j)$ is a winning set in the sense of
Schmidt's game, see \cite{An1, An2}. Recall that any countable
intersection of $\alpha$-winning sets is also $\alpha$-winning, and
an $\alpha$-winning set is of full Hausdorff dimension, see
\cite{ScP, ScB}. Hence An indeed improved BPV's theorem.

We now consider the case in dimension one, the set
\eqref{def:bad_ij} is then reduced to the classical set of
\emph{badly approximable numbers}.
\begin{equation}\label{def:bad}
\bad := \Big\{x \in \RR : \exists \  c(x)>0 \text{ such that }
\norm{qx}
> \frac{c(x)}{q}, \forall \ q \in \NN \Big\}.
\end{equation}
Given $0<\alpha, \beta<1$, let $\gamma := 1 - 2\alpha +
\alpha\beta$, we say the pair $(\alpha, \beta)$ is \emph{admissible}
if $\gamma > 0$. A classical result of Schmidt says that $\bad$ is
$(\alpha, \beta)$-winning for all admissible $(\alpha, \beta)$.
Observe that $(\frac{1}{2}, \beta)$ is always admissible for every
$0 < \beta < 1$, so in particular, $\bad$ is $1/2$-winning.

In \cite{DT}, the $\mcD$-adic mixed Diophantine problems were
studied. Let $\mcD$ be a bounded sequence of positive integers
$(d_k)_{k=1}^\infty$, where every $d_k \geq 2$. Let $D_0 := 1$, $D_n
:= \prod_{k=1}^{n} d_k$. For every natural number $q \in \NN$,
define the $\mcD$-\emph{adic pseudo absolute value} as follows,
\[
\norml{q}_\mcD := \inf \Big\{ \frac{1}{D_n} : q \in D_n \ZZ \Big\}.
\]
The $\mcD$-adic pseudo absolute value reduces to the usual $p$-adic
norm if we let $\mcD$ be the constant sequence consisting of a prime
number $p$. Recently, Badziahin, Levesley, and Velani initiated the
study of mixed Schmidt conjecture in \cite{BLV}. Let
\begin{align}\label{def:bad_d_ij}
\bad_\mcD(i, j) := \Big\{& x\in \RR : \exists \  c(x)>0 \text{ such
that} \notag \\
& \max\big\{ \norml{q}_\mcD^{1/i}, \norm{qx}^{1/j} \big\}
> \frac{c(x)}{q}, \forall \ q \in \NN \Big\}.
\end{align}
We call this set as \emph{the set of mixed $(i, j)$-badly
approximable numbers}. The mixed Schmidt conjecture is then stated
as follows: for any two pairs of real numbers $(i_1, j_1)$ and
$(i_2, j_2)$ satisfying \eqref{ij}, we have
\[
\bad_\mcD(i_1, j_1) \cap \bad_\mcD(i_2, j_2) \neq \emptyset.
\]
In \cite{BLV}, they proved $\bad_\mcD(i, j)$ is $1/4$-winning, thus
resolved the mixed Schmidt conjecture affirmatively.

Now we recall the notion of winning dimension, which is introduced
in \cite{ScP}. The definition of $(\alpha,\beta)$-winning and
$\alpha$-winning will be reviewed in Section \ref{S:lemmas}. Let $S
\subset \RR^n$, the \emph{winning dimension} of $S$, denoted by
$\wdim S$, is defined as follows
\[
\wdim S = \sup \{0< \alpha < 1 : S \text{ is }\alpha\text{-winning}
\}.
\]
A result in \cite{ScP} says that if $S$ is a nontrivial subset, then
$0 \leq \wdim S \leq \frac{1}{2}$. For example, $\wdim \bad =
\frac{1}{2}$. BLV's theorem says that $\wdim \bad_\mcD(i, j) \geq
\frac{1}{4}$. Moshchevitin asked whether $\wdim \bad_\mcD(i, j) =
\frac{1}{2}$ in his recent survey on Diophantine problems, see
\cite{Mo}. This paper answers this question affirmatively. In fact,
our theorem is a natural generalization of Schmidt's classical
result on the set $\bad$.

\begin{theorem}\label{thm:mythm}
The set $\bad_\mcD(i, j)$ is $(\alpha, \beta)$-winning for all
admissible $(\alpha, \beta)$, in particular, $\bad_\mcD(i, j)$ is
$1/2$-winning.
\end{theorem}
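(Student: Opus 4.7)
The plan is a direct adaptation of Schmidt's classical proof that $\bad$ is $(\alpha,\beta)$-winning for every admissible pair; the $\mcD$-adic admissibility only sparsifies the relevant denominators, so the strategy should carry over with only minor modifications. For $c > 0$, call $q \in \NN$ \emph{$c$-admissible} when $\norml{q}_\mcD^{1/i} \leq c/q$, equivalently when $q \leq c D_k^{1/i}$ for some $D_k$ dividing $q$, and set
\[
I_{p,q} := \bigl[ p/q - c^j q^{-(1+j)},\; p/q + c^j q^{-(1+j)}\bigr].
\]
Then $\bad_\mcD(i,j) = \bigcup_{c > 0} F_c$ where $F_c := \RR \setminus \bigcup I_{p,q}$, the union being over $p \in \ZZ$ and $c$-admissible $q$. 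So it suffices to show that for every sufficiently small $c > 0$ (depending on $\alpha, \beta$, and the radius $R$ of Bob's first ball), the set $F_c$ is $(\alpha,\beta)$-winning.

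\smallskip

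Write $R_n := R (\alpha\beta)^{n-1}$ for the radius of Bob's $n$-th ball. Each interval $I_{p,q}$ has common length $\ell_q := 2c^j q^{-(1+j)}$, and for fixed $q$ consecutive intervals are spaced $1/q$ apart. Assign each admissible $q$ a \emph{rank} $n(q) \in \NN$ by a logarithmic bracketing of $\ell_q$ in powers of $\alpha\beta$, and call $I_{p,q}$ a \emph{stage-$n$ threat} if $n(q) = n$ and $I_{p,q} \cap B_n \neq \emptyset$. Alice's inductive strategy is to play $A_n \subset B_n$ so that, after a bounded number of subsequent moves, Bob's ball becomes disjoint from every stage-$\leq n$ threat. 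The admissibility condition $\gamma = 1 - 2\alpha + \alpha\beta > 0$ is exactly the geometric room supplied by Schmidt's classical placement lemma, which guarantees that such moves exist.

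\smallskip

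Two sub-lemmas underlie the argument. The \emph{placement sub-lemma} (Schmidt's classical geometry) says that when a single interval $I$ of length proportional to $R_n$ meets $B_n$, Alice can position $A_n \subset B_n$ so that the overlap $A_n \cap I$ shrinks by at least a factor of $\alpha\beta$ from the previous round's overlap; iterating, $I$ is cleared in $O(1)$ rounds, with the total price paid being a convergent geometric series. The \emph{counting sub-lemma} states that, for $c$ sufficiently small, at each stage $n$ there is only a bounded number (ideally just one) of stage-$n$ threats. The counting uses two ingredients: (i) the spacing $1/q$ between successive $I_{p,q}$'s greatly exceeds $2R_n$ when $c$ is small, so at most one interval per admissible $q$ intersects $B_n$; (ii) the rank-$n$ condition confines $q$ to a multiplicative window of width $(\alpha\beta)^{-1/(1+j)}$, and $c$-admissibility forces $q$ to be a multiple of some $D_k \geq (q/c)^i$ with $D_k/D_{k-1} \geq 2$, so small $c$ leaves at most one admissible $q$ in the window.

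\smallskip

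The main obstacle is the counting sub-lemma: the window width $(\alpha\beta)^{-1/(1+j)}$ can be large when $\alpha\beta$ is small, so the $\mcD$-adic growth $D_k/D_{k-1} \geq 2$ must be balanced carefully against the window via the choice of $c$ (and, if necessary, by replacing individual stages with blocks of bounded length and spreading Alice's clearing over the block). Once this counting is established, Alice's strategy is well-defined: at every stage she handles the few threats of her current rank using Schmidt's placement lemma, and the geometric clearing times sum to a finite bound. Consequently the limit point $x_\infty = \bigcap_n B_n$ avoids every $I_{p,q}$, hence lies in $F_c \subset \bad_\mcD(i,j)$, which proves the theorem.
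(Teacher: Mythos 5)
Your overall architecture matches the paper's: decompose $\bad_\mcD(i,j)$ into the sets $F_c$, group the dangerous rationals by denominator size into blocks matched to $t$ consecutive rounds of the game, show that at most one dangerous interval survives per block, and clear it with Schmidt's placement lemma over those $t$ rounds. The placement half is fine. The gap is in your counting sub-lemma, specifically ingredient (ii). You claim that ``$c$-admissibility forces $q$ to be a multiple of some $D_k \geq (q/c)^i$ with $D_k/D_{k-1}\geq 2$, so small $c$ leaves at most one admissible $q$ in the window.'' This does not follow and is in general false: the admissibility condition is $D_n \geq (q/c)^i$ where $D_n$ is the largest $D_m$ dividing $q$, and \emph{every} $q = D_n q^\ast$ with $q^\ast \leq c\, D_n^{j/i}$ (and $q \notin D_{n+1}\ZZ$) is admissible. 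For large $D_n$ the number of such $q$ inside a single multiplicative window grows without bound as the window moves out, no matter how small $c$ is. So you cannot reduce to ``one denominator per window,'' and your strategy as described does not control the threats coming from distinct admissible denominators in the same window. Spacing arguments for a fixed $q$ (your ingredient (i)) do not help here, since the nearby intervals have different denominators.

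The correct replacement --- and the actual content of the lemma from Badziahin--Levesley--Velani that the paper uses --- is a separation statement for the \emph{points}, not a uniqueness statement for the denominators: if $r_1/q_1 \neq r_2/q_2$ both lie in the same window with both denominators admissible, then $|r_1/q_1 - r_2/q_2| \geq (q_1,q_2)/(q_1 q_2)$, and since each $q_s$ is divisible by some $D_{n_s} \geq (q_s/c)^i$ and the $D_n$ form a divisibility chain, one gets $(q_1,q_2) \geq \min\{D_{n_1}, D_{n_2}\}$, which is large. Comparing this lower bound on the gap with the radius $\rho_1 R^{-t(k-1)}$ of Bob's ball at the start of the $k$-th block (for suitably small $\rho_1$ and $c$) is what yields ``at most one threat per block.'' Without this gcd argument your counting sub-lemma --- which you yourself flag as the main obstacle --- is unproved, and the proof is incomplete.
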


As a result, $\wdim \bad_\mcD(i, j) = \frac{1}{2}$. A result in
\cite{ScB} says a set is $(\alpha, \beta)$-winning for $\gamma \leq
0$ if and only if this set is the whole set, accordingly, the
winning property of $\bad_\mcD(i, j)$ is the best possible, so we
give the best improvement of BLV's result in the sense of winning
dimension.

Given a prime number $p$, we use $\bad_p(i,j)$ to denote the set
defined by \eqref{def:bad_d_ij} where the $\mcD$-adic is replaced by
$p$-adic. Theorem \ref{thm:mythm} gives the following corollary.

\begin{corollary} \label{cor:mycorollary}
For any two different prime numbers $p, q$, for any two pairs of
real numbers $(i_1, j_1)$ and $(i_2, j_2)$ satisfying \eqref{ij},
the set $\bad_p(i_1, j_1) \cap \bad_q(i_2, j_2)$ is $1/2$-winning.
In particular,
\[
\bad_p(i_1, j_1) \cap \bad_q(i_2, j_2) \neq \emptyset.
\]
\end{corollary}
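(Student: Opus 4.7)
The plan is to deduce this corollary directly from Theorem \ref{thm:mythm} combined with two standard closure properties of winning sets that are already recalled in the introduction, namely that any countable intersection of $\alpha$-winning sets is $\alpha$-winning, and that an $\alpha$-winning set has full Hausdorff dimension.

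First, I would specialize Theorem \ref{thm:mythm} to constant sequences. The paper notes explicitly that when $\mcD$ is the constant sequence consisting of a single prime, the $\mcD$-adic pseudo absolute value $\norml{\cdot}_\mcD$ reduces to the usual $p$-adic absolute value, so $\bad_p(i,j)$ is literally the set $\bad_\mcD(i,j)$ for $\mcD=(p,p,\ldots)$. This sequence is bounded with every $d_k=p\geq 2$, hence satisfies the hypotheses on $\mcD$. Applying Theorem \ref{thm:mythm} twice, once with $\mcD=(p,p,\ldots)$ and $(i,j)=(i_1,j_1)$, and once with $\mcD=(q,q,\ldots)$ and $(i,j)=(i_2,j_2)$, shows that both $\bad_p(i_1,j_1)$ and $\bad_q(i_2,j_2)$ are $1/2$-winning.

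Next, I would invoke Schmidt's countable intersection result from \cite{ScP}: a countable intersection of $\alpha$-winning sets is itself $\alpha$-winning. Applied to our two sets with $\alpha=1/2$, this immediately yields that $\bad_p(i_1,j_1)\cap\bad_q(i_2,j_2)$ is $1/2$-winning, which is the main conclusion of the corollary. The nonemptiness claim then follows because a $1/2$-winning subset of $\RR$ has full Hausdorff dimension, hence is uncountable and in particular nonempty.

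No real obstacle is anticipated: the corollary is formal once Theorem \ref{thm:mythm} is in hand, and the only minor point to verify is that constant prime sequences fit the framework of $\mcD$-adic pseudo absolute values, which is immediate. Note that neither the primality nor the distinctness of $p$ and $q$ enters the winning argument; distinctness is present only to make the statement genuinely ``mixed'', since otherwise the intersection could collapse to a single $\bad_p(i,j)$, but the conclusion would remain valid by the same reasoning.
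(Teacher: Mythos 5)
Your proposal is correct and matches the argument the paper intends: specialize Theorem \ref{thm:mythm} to the constant sequences $(p,p,\ldots)$ and $(q,q,\ldots)$ to get that each of $\bad_p(i_1,j_1)$ and $\bad_q(i_2,j_2)$ is $1/2$-winning, then apply Schmidt's countable-intersection property and the full-Hausdorff-dimension (hence nonemptiness) consequence, all of which the paper recalls in the introduction. Your side remark that primality and distinctness of $p$ and $q$ play no role in the deduction is also accurate.
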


Note that the ``In particular'' part could also be deduced from
BLV's result.

The rest of the paper is organized as follows: in Section
\ref{S:lemmas}, we introduce the notion of Schmidt's game and
establish some notations, then we give two useful lemmas; the proof
of theorem \ref{thm:mythm} will be given in Section \ref{S:pf}.

\section{Schmidt's Game and Two Lemmas}\label{S:lemmas}

First we recall the notion of Schmidt's game, for details see
\cite{ScP, ScB}. In this paper we only consider Schmidt's game on
$\RR$, so we restrict our description only in this situation. Given
a set $S \subset \RR$, given two real numbers $0 < \alpha, \beta <
1$, two players, say Alice and Bob, will play the game.  The game is
played as follows, Bob starts the game by choosing a closed interval
$\bob_1 \subset \RR$, then Alice chooses an closed interval
$\alice_1$ such that $\alice_1 \subset \bob_1$ and $\rho(\alice_1) =
\alpha \rho(\bob_1)$, then Bob chooses another closed interval
$\bob_2$ such that $\bob_2 \subset \alice_1$ and $\rho(\bob_2) =
\beta\rho(\alice_1)$, then Alice chooses another closed interval
$\alice_2$ such that $\alice_2 \subset \bob_2$ and $\rho(\alice_2) =
\alpha \rho(\bob_2)$, and so on. Here $\rho(\bf A) = \frac{1}{2}
|\bf A|$, where $|\bf A|$ denotes the length of the interval $\bf
A$. We can see that intervals appearing in the game obey the
following relation, $\bob_1 \supset \alice_1 \supset \bob_2 \supset
\alice_2 \supset \ldots$. We say Alice wins the game if she can play
such that the single point in $\cap_{k=1}^\infty \alice_k =
\cap_{k=1}^\infty \bob_k$ lies in $S$, otherwise Bob wins. We say
$S$ is \emph{$(\alpha, \beta)$-winning} if Alice can always win the
game no matter how Bob plays, and $S$ is \emph{$\alpha$-winning} if
it is $(\alpha, \beta)$-winning for every $0 < \beta < 1$.

Let $\rho_k := \rho(\bob_k)$, then $\rho_{k+t} = (\alpha\beta)^t
\rho_k$. We now give the first lemma.

\begin{lemma} \label{lemma:schmidt}
Assume $(\alpha, \beta)$ is admissible. Let $t\in \NN$ be such that
$(\alpha\beta)^t < \frac{1}{2}\gamma$. Suppose an interval $\bob_k$
occurs in the $(\alpha, \beta)$ game, and suppose $y\in \RR$ is an
arbitrary fixed point, then Alice can play, no matter how Bob plays,
such that for every point $x \in \bob_{k+t}$,
\[
|x - y| > \frac{1}{2} \gamma \rho_k.
\]
\end{lemma}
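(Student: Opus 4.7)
The plan is to have Alice play a steering strategy that consistently pushes her interval maximally away from $y$ over the course of the $t$ rounds. Without loss of generality, I will assume $y$ lies weakly to the left of the center $c_k$ of $\bob_k$; otherwise reflect the setup. At each round $l \in \{k, k+1, \ldots, k+t-1\}$, Alice chooses the interval $\alice_l \subset \bob_l$ centered at $c_l + (1-\alpha)\rho_l$, i.e., flush against the right endpoint of $\bob_l$.

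The key single-round observation is that under Bob's worst-case response, which places $\bob_{l+1}$ flush against the left endpoint of $\alice_l$, an elementary calculation gives $c_{l+1} = c_l + (1 - 2\alpha + \alpha\beta)\rho_l = c_l + \gamma\rho_l$. So the admissibility quantity $\gamma$ emerges naturally as the per-round drift of the center away from $y$, and the hypothesis $\gamma > 0$ is exactly what makes the strategy make progress. Since the direction of the drift is preserved and $\rho_l = (\alpha\beta)^{l-k}\rho_k$, summing the geometric series yields
\[
c_{k+t} - c_k \;=\; \gamma\rho_k \cdot \frac{1 - (\alpha\beta)^t}{1 - \alpha\beta}.
\]
The point of $\bob_{k+t}$ closest to $y$ is its left endpoint, and taking the worst case $y = c_k$ reduces the conclusion to a single scalar inequality; using $1 + \gamma - \alpha\beta = 2(1-\alpha)$ it rearranges to
\[
(\alpha\beta)^t \;<\; \frac{\gamma(1+\alpha\beta)}{4(1-\alpha)}.
\]

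The main obstacle is that this target inequality is \emph{not} uniformly implied by the hypothesis $(\alpha\beta)^t < \gamma/2$. The useful identity
\[
\gamma(1+\alpha\beta) - 4\alpha\beta(1-\alpha) \;=\; (1-\alpha\beta)(\gamma - 2\alpha\beta)
\]
allows a short dichotomy on the sign of $\gamma - 2\alpha\beta$. When $\gamma \geq 2\alpha\beta$, the identity yields $\alpha\beta \leq \frac{\gamma(1+\alpha\beta)}{4(1-\alpha)}$, so the a priori bound $(\alpha\beta)^t \leq \alpha\beta$, combined with the strict hypothesis at the boundary $\gamma = 2\alpha\beta$, delivers the target. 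When $\gamma < 2\alpha\beta$, the same identity instead gives $\gamma/2 < \frac{\gamma(1+\alpha\beta)}{4(1-\alpha)}$, so the hypothesis $(\alpha\beta)^t < \gamma/2$ alone suffices. Either way the scalar inequality holds, and the lemma follows.
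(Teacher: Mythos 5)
Your proof is correct and follows essentially the same strategy as the paper's (and Schmidt's original): Alice repeatedly chooses the inscribed interval flush against the endpoint of $\bob_l$ farther from $y$, the worst-case per-round drift of the center is exactly $\gamma\rho_l$, and summing the geometric series gives the bound. The paper only sketches this and defers the computation to Schmidt's book; your dichotomy via the identity $\gamma(1+\alpha\beta)-4\alpha\beta(1-\alpha)=(1-\alpha\beta)(\gamma-2\alpha\beta)$ correctly supplies the missing verification that $(\alpha\beta)^t<\gamma/2$ implies the needed scalar inequality.
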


This lemma is essentially due to Schmidt, we just write it in a
slightly different form in order to facilitate the proof of our
theorem. See Schmidt's book \cite{ScB} p.49 for a complete proof.
Here we give only the proof's main idea.

\begin{proof}
Without loss of generality, we could assume $y$ be the middle point
of $\bob_k$. Alice adopts the strategy that always selecting the
most left possible inscribed interval in each turn. Then after $t$
turns, all points in $\bob_{k+t}$ will satisfy the property in the
lemma.
\end{proof}

To give the next lemma we need some notations from \cite{BLV}, we
put them here for completeness. For any real number $c>0$, let
\[
\bad_\mcD(c; i, j) := \Big\{ x\in \RR : \max\{ \norml{q}_\mcD^{1/i},
\norm{qx}^{1/j} \}
> \frac{c}{q}, \forall \ q \in \NN \Big\},
\]
then we see
\[
\bad_\mcD(i, j) = \bigcup_{c>0} \bad_\mcD(c;i,j).
\]
Let
\[
\mcC_c := \left\{ \frac{r}{q} \in \QQ : (r, q) = 1, q>0, \text{ and
} \norml{q}_\mcD \leq \left( \frac{c}{q} \right)^{i} \right\}.
\]
Let $P = \frac{r}{q}$, and let
\[
\Delta_c(P) := \Big\{ x\in \RR : |x - P| \leq \frac{c^j}{q^{1+j}}
\Big\},
\]
then clearly we have
\[
\bad_\mcD(c;i,j) = \RR \backslash \bigcup_{P \in \mcC_c}
\Delta_c(P).
\]
Let $R \in \RR, R > 1$, let $t \in \NN$, both of which will be fixed
in Section \ref{S:pf}. Define
\[
\mcC_{c,k} := \Big\{P = \frac{r}{q} \in \mcC_c : R^{k-1} \leq
q^{\frac{1+j}{t}} < R^k \Big\},
\]
then we have
\[
\mcC_c = \bigcup_{k=1}^\infty \mcC_{c,k},
\]
hence
\begin{equation} \label{vital_relation}
\bad_\mcD(c;i,j) = \RR \backslash \bigcup_{k=1}^{\infty} \bigcup_{P
\in \mcC_{c,k}} \Delta_c(P).
\end{equation}
It is this relation that will be used in the proof in Section
\ref{S:pf}.

Now we give the following lemma, the idea of which is already in
\cite{BLV}.

\begin{lemma}\label{lemma:BLV}
For any two different points $P_s = \frac{r_s}{q_s} \in \mcC_{c,k},
s = 1, 2$, we have
\[
|P_1 - P_2| > c^{-i} R^{ \frac{i}{1+j}t(k-1) - \frac{1}{1+j}2tk}.
\]
\end{lemma}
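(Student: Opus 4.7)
The plan is to exploit, for each $P_s = r_s/q_s \in \mcC_{c,k}$, two features built into the definition: the $\mcD$-adic divisibility coming from membership in $\mcC_c$, and the two-sided size control on $q_s$ coming from membership in $\mcC_{c,k}$. From $\norml{q_s}_\mcD \leq (c/q_s)^i$ I would extract an index $n_s$ with $D_{n_s} \mid q_s$ and $D_{n_s} \geq (q_s/c)^i$, and from $R^{k-1} \leq q_s^{(1+j)/t} < R^k$ the sandwich $R^{t(k-1)/(1+j)} \leq q_s < R^{tk/(1+j)}$.

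Next I would write
\[
|P_1 - P_2| = \frac{|r_1 q_2 - r_2 q_1|}{q_1 q_2}
\]
and lower-bound the numerator using $\mcD$-adic arithmetic. After relabeling so that $n_1 \leq n_2$, the sequence $(D_n)$ is multiplicatively nested, so $D_{n_1}$ divides $D_{n_2}$ and hence divides both $q_1$ and $q_2$; therefore $D_{n_1}$ divides the integer $r_1 q_2 - r_2 q_1$. Since $P_1 \neq P_2$ this integer is nonzero, so
\[
|r_1 q_2 - r_2 q_1| \;\geq\; D_{n_1} \;\geq\; \left(\frac{q_1}{c}\right)^{\!i}.
\]

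Finally I would insert the sandwich from $\mcC_{c,k}$: bound $q_1^i$ from below by $R^{it(k-1)/(1+j)}$ (using $q_1 \geq R^{t(k-1)/(1+j)}$ and $i > 0$), and bound $q_1 q_2$ from above by $R^{2tk/(1+j)}$. Combining,
\[
|P_1 - P_2| \;\geq\; \frac{c^{-i} q_1^i}{q_1 q_2} \;>\; c^{-i}\, R^{\frac{i}{1+j}t(k-1) - \frac{1}{1+j}2tk},
\]
which is the claimed bound; strictness is inherited from the strict upper bound $q_s < R^{tk/(1+j)}$.

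The only step requiring a moment of thought is the WLOG reduction $n_1 \leq n_2$, after which the identification of $D_{n_1}$ as a common divisor of $q_1$ and $q_2$, and hence of $r_1 q_2 - r_2 q_1$, is immediate. Everything else is plugging in the defining inequalities of $\mcC_c$ and $\mcC_{c,k}$, so this is a one-shot calculation rather than a place where a genuine obstacle lurks.
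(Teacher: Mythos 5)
Your proposal is correct and follows essentially the same route as the paper: both extract the index $n_s$ with $D_{n_s}\mid q_s$ and $D_{n_s}\geq (q_s/c)^i$, use the nesting of the $D_n$ to produce a large common divisor of $q_1$ and $q_2$ (hence of the nonzero integer $r_1q_2-r_2q_1$), and finish with the two-sided bounds on $q_s$ from the definition of $\mcC_{c,k}$. The only cosmetic difference is that the paper phrases the key step as $(q_1,q_2)\geq\min\{D_{n_1},D_{n_2}\}$ rather than working with $D_{\min(n_1,n_2)}$ directly.
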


\begin{proof}
By $P = \frac{r}{q} \in \mcC_{c,k} \subset \mcC_c$, we have
$\norml{q}_\mcD \leq (\frac{c}{q})^i$, by the definition of the norm
$\norml{q}_\mcD$, there is an appropriate $n\in \NN, q^{\ast} \in
\NN$ such that
\[
q = D_n q^{\ast}, \text{ and } q \notin D_{n+1} \ZZ,
\]
hence,
\[
D_n \geq \left(\frac{q}{c}\right)^i \geq c^{-i} R^{
\frac{i}{1+j}t(k-1)}.
\]
Now there will be $D_{n_1}$ and $D_{n_2}$ respectively for $P_1$ and
$P_2$, and one of them will divide another by the definition of
$D_n$, so $(q_1, q_2) \geq \min\{D_{n_1}, D_{n_2}\} \geq c^{-i} R^{
\frac{i}{1+j}t(k-1)}$. Therefore,
\[
|P_1 - P_2| \geq \frac{(q_1, q_2)}{q_1 q_2} >  c^{-i} R^{
\frac{i}{1+j}t(k-1) - \frac{1}{1+j}2tk}.
\]
\end{proof}

\section{Proof of theorem}\label{S:pf}
Now we prove theorem \ref{thm:mythm}. Given $(\alpha, \beta)$ be
admissible, then $0< \gamma < 1$. Fix one $t\in \NN$ such that
$(\alpha\beta)^t < \frac{1}{2}\gamma$, which is used in lemma
\ref{lemma:schmidt}. Our aim is to show that $\bad_\mcD(i, j)$ is
$(\alpha, \beta)$-winning. Without loss of generality we can assume
that $\rho_1$ is very small, so we take the following constants,
\begin{equation} \label{consts}
\begin{cases}
R = \frac{1}{\alpha\beta} > 1, \\
0 < \rho_1 < \left( \frac{1}{4} R^{-\frac{2t}{1+j}} \right)^j, \\
0 < c < (\frac{1}{2} \gamma \rho_1)^{1/j} < \rho_1^{1/j}.
\end{cases}
\end{equation}
As we pointed out in Section \ref{S:lemmas} that $\bad_\mcD(i, j) =
\bigcup_{c>0} \bad_\mcD(c; i,j)$, hence it suffices to show that for
the $c$ satisfying \eqref{consts}, $\bad_\mcD(c; i, j)$ is $(\alpha,
\beta)$-winning.

\begin{proof}
We prove it by showing the following two facts.

\textbf{Fact 1}. For every $k \geq 1$,
\[
\#\{ P \in \mcC_{c,k} : \Delta_c(P) \cap \bob_{t(k-1)+1} \neq
\emptyset \} \leq 1.
\]

\textbf{Fact 2}. Suppose \textbf{Fact 1} holds, then Alice can play,
no matter how Bob plays, such that for every $k \geq 1$,
\[
\#\{ P \in \mcC_{c,k} : \Delta_c(P) \cap \bob_{tk+1} \neq \emptyset
\} = 0,
\]
which is equivalent to
\[
\Delta_c(P) \cap \bob_{tk+1} = \emptyset, \quad \forall\  P \in
\mcC_{c,k}.
\]

Notice that the above equation implies
\[
\Delta_c(P) \cap \bob_{tk+1} = \emptyset, \quad \forall\  P \in
\mcC_{c,l}, \quad l = 1, 2, \ldots, k.
\]
Recall the relation \eqref{vital_relation}, then \textbf{Fact 2} is
equivalent to say that Alice can play such that the single point in
$\cap_{k=1}^\infty \bob_{tk+1}$ lies in $\bad_\mcD(c;i,j)$, so Alice
can always win the game and we are done. Hence we are only left to
show the two facts.

Now we show \textbf{Fact 1}. Let $z$ be the middle point of
$\bob_{t(k-1)+1}$. For those $P \in \mcC_{c,k}$ satisfying
$\Delta_c(P) \cap \bob_{t(k-1)+1} \neq \emptyset$, let $x \in
\Delta_c(P) \cap \bob_{t(k-1)+1}$, then
\[
|P - z| \leq |P - x| + |x - z| \leq \frac{c^j}{q^{1+j}} +
\rho_{t(k-1)+1} < 2\rho_1 R^{-t(k-1)}.
\]
Assume there are two points $P_1, P_2 \in \mcC_{c,k}$, $P_1 \neq
P_2$, and they satisfy $\Delta_c(P_1) \cap \bob_{t(k-1)+1} \neq
\emptyset$, $\Delta_c(P_2) \cap \bob_{t(k-1)+1} \neq \emptyset$. By
applying lemma \ref{lemma:BLV}, then
\[
c^{-i} R^{ \frac{i}{1+j}t(k-1) - \frac{1}{1+j}2tk} < |P_1 - P_2|
\leq |P_1 - z| + |z - P_2| < 4\rho_1 R^{-t(k-1)},
\]
which is equivalent to
\[
4\rho_1 c^i > R^{ (\frac{i}{1+j} + 1) t(k-1) - \frac{1}{1+j}2tk} =
R^{- \frac{2t}{1+j}}.
\]
Now use our assumption for $c$ in \eqref{consts}, then
\[
R^{- \frac{2t}{1+j}} < 4\rho_1 c^i < 4 \rho_1^{1 + \frac{i}{j}} = 4
\rho_1^{\frac{1}{j}},
\]
contradicts to our assumption on $\rho_1$.

Now we show \textbf{Fact 2}. We proceed by induction. The base is
quite clear. Suppose that an interval $\bob_{t(k-1)+1}$ occurs in
the game and it has empty intersection with all intervals
$\Delta_c(P)$ with $P \in \mcC_{c,k-1}$. By \textbf{Fact 1}, there
is not more than one ``dangerous'' point $P \in \mcC_{c,k}$ such
that $\Delta_c(P) \cap \bob_{t(k-1)+1} \neq \emptyset$. View this
point $P$ as the point $y$ in lemma \ref{lemma:schmidt}, by applying
that lemma, then Alice can play, no matter how Bob plays, such that
for all $x \in \Delta_c(P) \cap \bob_{tk+1}$, we have
\[
\frac{c^j}{q^{1+j}} \geq |x - P| > \frac{1}{2} \gamma
\rho_{t(k-1)+1} = \frac{1}{2} \gamma \rho_1 R^{-t(k-1)},
\]
since $P \in \mcC_{c,k}$, this gives
\[
c^j R^{-t(k-1)} \geq \frac{c^j}{q^{1+j}} > \frac{1}{2} \gamma \rho_1
R^{-t(k-1)},
\]
which reduces to
\[
c^j > \frac{1}{2} \gamma \rho_1,
\]
contradicts to our assumption on $c$. So $\Delta_c(P) \cap
\bob_{tk+1} = \emptyset$.
\end{proof}

\section*{Acknowledgments}
The author is grateful to Professor Jinpeng An for many valuable
conversations. He thanks the referee's advice on how to make the
proof more clear. The author also thanks Mengting Pan for her
selfless support and constant encouragement.


\begin{thebibliography}{4}

\bibitem{An1}J. An, \textit{Badziahin-Pollington-Velani's theorem and
Schmidt's game}, Bull. Lond. Math. Soc., to appear.

\bibitem{An2}J. An, \textit{Two dimensional badly approximable vectors and
Schmidt's Game}, preprint, arXiv:1204.3610.

\bibitem{BPV}D. Badziahin, A. Pollington, S. Velani, \textit{On a problem
in simultaneous Diophantine approximation: Schmidt's conjecture},
Ann. of Math. (2) 174 (2011), no.3, 1837-1883.

\bibitem{BLV}D. Badziahin, J. Levesley, S. Velani, \textit{The mixed
Schmidt conjecture in the theory of Diophantine approximation},
Mathematika 57 (2011), 239-245.

\bibitem{DT}B. de Mathan, O. Teuli\'e, \textit{Probl\`emes diophantiens
simultan\'es}, Monatsch. Math. 143 (2004), no.3, 229-245.

\bibitem{Mo}N. Moshchevitin, \textit{On some open problems in Diophantine
approximation}, preprint, arXiv:1202.4539.

\bibitem{ScP}W. M. Schmidt, \textit{On badly approximable numbers
and certain games}, Trans. Amer. Math. Soc. 123 (1966), 178-199.

\bibitem{ScB}W. M. Schmidt, \textit{Diophantine approximation},
Lecture Notes in Mathematics 785, Springer, Berlin, 1980.

\end{thebibliography}
\end{document}